\newcommand{\Z}{\mathbb Z}
\newcommand{\C}{{\mathbb C}^\times}
\newcommand{\F}{{\mathbb F}}
\newcommand{\findem}{\qed}
\theoremstyle{definition}
\DeclareMathOperator{\Sp}{Sp}
\DeclareMathOperator{\SLc}{SL}
\DeclareMathOperator{\GL}{GL}
\DeclareMathOperator{\ot}{O}
\DeclareMathOperator{\uni}{U}
\DeclareMathOperator{\m}{M}
\DeclareMathOperator{\tr}{t}
\DeclareMathOperator{\sign}{sgn}
\DeclareMathOperator{\ide}{id}
\DeclareMathOperator{\Le}{L}
\DeclareMathOperator{\Hom}{Hom}
\DeclareMathOperator{\Aut}{Aut}
\DeclareMathOperator{\Nor}{N}
\newtheorem{thm}{Theorem}[section]
\newtheorem{prop}[thm]{Proposition}
\newtheorem{lem}[thm]{Lemma}
\newtheorem{rmk}[thm]{Remark}
\newtheorem{defi}[thm]{Definition}
\newtheorem{cor}[thm]{Corollary}
\title{ Weil representations  of $\uni(n,n)\left(\F_{q^2}/\F_q\right)$, $q>3$ odd via presentation and compatibility  of methods. }
\author{Luis Guti\'{e}rrez Frez\\ Universidad Austral de Chile, Chile \thanks{The author was  partially supported  by Direcci\'on de Investigaci\'on y Desarrollo, Universidad Austral de Chile, Chile.}\\
 Andrea Vera-Gajardo\\ Durham University, United Kingdom \thanks{The author was  partially supported  by Conicyt-Chile and Durham University, UK.}}
\date{}
\begin{document}
\maketitle

\begin{abstract}
In this article we construct  Weil representations of  quasi-split unitary groups  $\uni(n,n)(\F_{q^2}/\F_q)$ ($q>3$) attached to  quadratic extensions of finite fields,  by using Bruhat-like presentations of those groups. More precisely, we define Weil representations of $\uni(n,n)(\F_{q^2}/\F_q)$ associating to each generator a linear map of a suitable $\mathbb{C}$-vector space satisfying the relations of the aforementioned presentation.
In addition, we also address the natural question on the compatibility of our representation of $\uni(n,n)\left(\F_{q^2}/\F_q\right)$ with the classical one constructed by G\'erardin.
\end{abstract}

Mathematics Subject Classification 2010: 20C33, 20G05, 20G40.\\

Key Words and Phrases: Weil representations, Quasi-split unitary groups, Generalized classical groups.
\section{Introduction}

Nowadays, Weil representations are a vast topic in mathematical 
research. They arose from A. Weil's celebrated paper \cite{weil} as projective representations of the symplectic groups $\Sp(2n,F)$, where $F$ is a locally compact field. In this work Weil took advantage of the representation theory of the Heisenberg group $\mathcal{H}_n$ associated to a symplectic space of rank $n$, described by the Stone-von Neumann theorem for the real case.
Weil representations have multiple consequences in diverse areas, including number theory, physics  and algebra. A remarkable fact is that by decomposition into irreducible components, they provide a fruitful method for constructing representations of some classical groups over a finite or local field of residual characteristic different from two \cite{ku}. In the same way, in \cite{nobs1}, \cite{nobs2},  \cite{nobs3} the authors use Weil representations to give a description of the representations of $\GL_2(\Z_p)$ and $\SLc_2(\Z_p)$, including $p=2$.\\

Regarding unitary groups,  G\'erardin \cite{ge} constructed  Weil representations of  unitary groups associated to  unitary spaces $(F,i)$. More precisely,  given a quadratic extension $K/k$ of finite fields and  $^{-}$ the nontrivial $k$-automorphism of $K$, we let $(F,i)$ denote a $K$-vector space $F$ equipped with the nondegenerate skew-hermitian form $i$ given by $^{-}$.  Let $E$ be the underlying $k$-vector space of $F$. G\'erardin attached to  $(F,i)$  a nondegenerate symplectic form $j$ on $E$ in such a way that the unitary group $\uni(F,i)$ is embedded into the symplectic group $\Sp(E,j)$. Thus, he obtained a representation of  $\uni(F,i)$ by restricting the Weil representation of $\Sp(E,j)$ to the subgroup $\uni(F,i)$.\\

 In the seventies, Cartier observed that Weil representations  of $\Sp(2,\mathbb{R})$ could be simply constructed by assigning to each generator of certain presentation of $\SLc(2,\mathbb{R})$ a linear operator of $\Le^2(\mathbb{R})$ satisfying the relations of the given presentation. Then Soto-Andrade \cite{soto} considered this approach to construct a Weil representation of $\Sp(2n,F)$, where $F$ is a finite field. For this purpose, he found a suitable presentation and the corresponding linear operators,
regarding the symplectic group $\Sp(2n,F)$ as a sort of generalized special group $\SLc(2)$,

 Soto-Andrade's idea of considering $\Sp(2n,F)$ as an analogue of $\SLc(2)$ was the seed for a subsequent work in which Pantoja and Soto-Andrade \cite{jofal,comm} have used this approach for a more general setting. More precisely, they defined the groups  $\GL_*^{\varepsilon}(2,A)$ and $\SLc_*^{\varepsilon}(2,A)$ which are groups of matrices over an involutive ring $(A,*)$ whose entries satisfy certain commutation relations involving the involution $*$ and $\varepsilon=\pm 1$. For instance, if we take the full matrix ring $A=\m(n,F)$ and the involution $*$ given by matrix transposition then the corresponding group $\SLc_*^{-1}(2,A)$ is the symplectic group  $\Sp(2n,F)$ and  $\SLc_*^{1}(2,A)$ is the split orthogonal group $\ot(n,n)(F)$. \\

Later on, Guti\'errez Frez, Pantoja and Soto-Andrade \cite{gpsa_1} 
gave a method to construct a very general Weil representation for $\SLc_*^{\varepsilon}(2,A)$ when this group has a Bruhat-like presentation.
 Firstly, they define an abstract core data consisting of a module $M$ over the involutive ring $(A, *)$ equipped with a suitable
nondegenerate complex valued self pairing $\chi$ and its second order homogeneous companion $\gamma$. Then the authors  associated to each generator of $\SLc_*^{\varepsilon}(2,A)$ a  suitable linear operator of $\mathbb{C}^M$ involving the aforementioned data. Finally, they  proved that those operators satisfy the commutating relations of the Bruhat-like presentation of  $\SLc_*^{\varepsilon}(2,A)$.\\.


 Since their origin Weil representations keep receiving much attention. Indeed, there are several papers concerning this topic. For instance, Tanaka \cite{Tan_1,Tan_2} constructed all irreducible  representations of 
  $\SLc_2(\mathbb{Z}/p^k\mathbb{Z})$ by using Weil representations, Soto-Andrade \cite{soto}  obtained all the irreducible representations of $\Sp(4,F)$, by decomposing both Weil representations associated to the two isomorphy types of quadratic forms over $F$ of rank 4, besides the previously mentioned Gerardin's work \cite{ge}.

 Some recent developments have been accomplished by Aubert and Przebinda \cite{aubert}, who gave a wide and detailed description  of the Weil representation attached to a symplectic group over a finite or local field, providing explicit computations and formulae. Herman and Szechtman \cite{He-Sze} constructed  Weil representations of unitary groups attached to ramified quadratic extension of local rings (odd characteristic) by embedding these groups into a symplectic group. Dutta and Prasad \cite{Pr-Du} defined a Weil representation associated to a finite abelian group of odd order, proving that it is multiplicity free and that each irreducible component is associated to an element of a partially ordered set.\\
  
 Regarding Weil representations via presentations, we may mention Vera-Gajardo \cite{vera} constructed a generalized Weil representation for the split orthogonal group $\ot(2n,2n) $ over a finite field $\F_q$ with $q>3$, proving its compatibility with Howe's theory of dual pairs. Guti\'errez Frez \cite{lucho} constructed a generalized Weil representation of $\SLc_*^{\varepsilon}(2,A)$, where $A$ is the truncated polynomial ring $\mathbb{F}_q[x]/\langle x^m\rangle$ equipped with the $\mathbb{F}_q$-linear  involution $x\mapsto -x$ for $\varepsilon=-1$, whereas Guti\'errez Frez and Pantoja \cite{gp} studied the case $\varepsilon=1$.  \\ \\
 
Let $\uni(n,n)(\F_{q^2}/\F_q)$ denote  the group of isometries of a nondegenerate hermitian bilinear form on a $\F_{q^2}$-vector space of dimension $2n$. In this article,  we construct a generalized Weil representation for the groups  $\uni(n,n)(\F_{q^2}/\F_q)$ for $q$ odd greater than $3$, regarding them as an $\SLc_*^{\varepsilon}(2,A)$ group for a suitable unitary ring $A$. For this purpose we will prove that the group has a  Bruhat-like presentation, and then we will construct some data which allow us to obtain the desired representation.
 



The main results of the paper are summarized  by the following theorems:


\textbf{Theorem 3.1.} The quasi-split unitary group $\uni(n,n)\left(\F_{q^2}/\F_q\right)$ is isomorphic to  $\SLc_*^{-1}(2,A_n)$, where $A_n$ is the full matrix ring $\m_{n}(\F_{q^2})$ equipped with the involution given by $(a^*)_{ij}=\overline{a_{ji}}.$


According to  \cite{manusc} Theorem 15, the group $\SLc_*^{-1}(2,A_n)$ has a Bruhat-like presentation. Therefore  this fact and the result above  will allow us to construct a generalized Weil representation for the group. In Theorem \ref{data}, we construct a data  $(M,\chi,\gamma,-1/q^n)$  for  $\SLc_*^{-1}(2,A_n)$ and  by using Theorem 4.4 in \cite{gpsa_1} we prove the following result.



 \textbf{Theorem 4.4.}
There exists a linear (complex) representation  $\left(\Aut(\mathbb{C}^{M}),\rho\right)$   of $\SLc_*^{-1}(2,A_n)$  defined on the generators as follows: 
\begin{itemize}
\item $\rho_{u_b}(e_a)=\gamma(b,a)e_a$,
\item $\rho_{h_t}(e_a)=\alpha(t)e_{at^{-1}}$,
\item $\rho_w(e_a)=c\displaystyle{\sum_{b\in M}\chi(-a,b)e_b}$,
 \end{itemize}
which  we call  generalized  Weil representation of $\SLc_*^{-1}(2,A)\cong \uni(n,n)\left(\F_{q^2}/\F_q\right)$ associated to the data $(M,\chi,\gamma,-1/q^n)$.

Furthermore we address the natural question on the compatibility of our representation with the classical one given by G\'eradin \cite{ge}. In fact, we prove:\\

\textbf{Theorem 6.1.}
The restriction of the classical Weil representation of the symplectic group $\Sp(E,j)$ to the group $\uni(n,n)\left(\F_{q^2}/\F_q\right)$ is the representation $(\mathbb{C}^M,\rho)$ constructed in Theorem 1.2.

Specifically, the article is organized as follows. In section 2, we introduce the main concepts on generalized classical groups   $\GL_*^{\varepsilon}(2,A)$, $\SLc_*^{\varepsilon}(2,A)$ and a construction of Weil representations of $\SLc_*^{\varepsilon}(2,A)$ attached to a data.  In section 3, we verify that  the quasi-split unitary group $\uni(n,n)\left(\F_{q^2}/\F_q\right)$  has a  Bruhat-like presentation. In section 4, we construct a data for the group $\uni(n,n)\left(\F_{q^2}/\F_q\right)$ and we write the linear operator associated to each generator explicitly. In section 5, we give an initial decomposition of the constructed representation. Finally in section 6, we prove the compatibility of methods for the group $\uni(n,n)\left(\F_{q^2}/\F_q\right)$ and we show that the components of the initial decomposition are irreducible.
\vspace{1 cm}

\section{ Preliminaries }
In this section, we shall introduce  the concepts of generalized classical groups   $\GL_*^{\varepsilon}(2,A)$  and $\SLc_*^{\varepsilon}(2,A)$ attached to a unitary  ring $A$ endowed with an involution $*$.  In addition, we also recall the method in \cite{gpsa_1} to construct generalized Weil representations  of  $\SLc_*^{\varepsilon}(2,A)$ when a Bruhat-like presentation and a data for the group are provided.
\subsection{ Generalized Classical Groups}

\vspace{0.4 cm}
Let $A$ be a unitary ring with  an involution $a\mapsto a^*$, {\it i.e.} an antiautomorphism of the ring $A$ of order two. Let $A^{\times}$ be the group of invertible elements of $A$, $Z(A)$ the center of $A$ and $A^{\varepsilon\text{-}sym}$  the set of all $\varepsilon$-symmetric elements of $A$, that is
 $A^{\varepsilon\text{-}sym}= \{a\in A \mid a^*=-\varepsilon a \}$.
The involution on $A$  induces an involution on the full matrix ring $\m(2,A)$, namely  $(g^{*})_{ij} = (g_{ji})^{*}$.
For $\varepsilon=1$ or $\varepsilon=-1$ in $A$ we set  
\[
J = \left(\begin{matrix} 0 & 1 \\\varepsilon 1& 0 \end{matrix}\right) \in \m(2,A).
\]
We write $ML_*^{\varepsilon}(2, A)$ to denote the set of matrices in $\m(2,A)$ such that $g^{*}JgJ^{-1} = \delta(g)I $, for some  $\delta(g)\in Z(A)\cap A^{\varepsilon\text{-}sym}  $. In analogy to the classical case,  Pantoja and Soto-Andrade \cite{comm}  define a  $*$-determinant  function by $\det_*(g) = ad^{*} +\varepsilon bc^{*}$ for $g= \left(\begin{smallmatrix} a & b \\c & d \end{smallmatrix}\right)$.  Then they  proved the following result.
\begin{prop} 
The set $\GL_*^{\varepsilon}(2,A)$, consisting of all invertible elements in $ML_*^{\varepsilon}(2, A)$,  is a group under matrix multiplication and $\det_*$ is an epimorphism of $\GL_*^{\varepsilon}(2,A)$ onto the group of all central $\varepsilon $-symmetric invertible elements of $A$. 
\end{prop}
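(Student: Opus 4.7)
The plan is to establish, in order, (i) that $\GL_*^{\varepsilon}(2,A)$ is closed under multiplication and inversion with a compatible identity, (ii) that $\det_*$ agrees with $\delta$ on this set and hence is a homomorphism, and (iii) that $\det_*$ surjects onto the group of central $\varepsilon$-symmetric invertible elements of $A$. Throughout I would work with the equivalent form $g^*Jg=\delta(g)J$ of the defining condition, together with the elementary identities $J^2=\varepsilon I$ and $J^*=J^{-1}=\varepsilon J$, which follow directly from the shape of $J$.

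For closure, given $g,h\in\GL_*^{\varepsilon}(2,A)$, the calculation $(gh)^*J(gh)=h^*(g^*Jg)h=h^*\delta(g)Jh=\delta(g)(h^*Jh)=\delta(g)\delta(h)J$ does the job, the essential point being that $\delta(g)\in Z(A)$ commutes past $h^*$. This yields $\delta(gh)=\delta(g)\delta(h)$, which is central as a product of central elements and $\varepsilon$-symmetric by a short check applying $*$ and invoking centrality. The identity is trivially in $\GL_*^{\varepsilon}(2,A)$ with $\delta(I)=1$. For inverses, multiplying $g^*Jg=\delta(g)J$ on the left by $(g^{-1})^*=(g^*)^{-1}$ and on the right by $g^{-1}$, together with centrality of $\delta(g)$, yields $(g^{-1})^*Jg^{-1}=\delta(g)^{-1}J$, so $g^{-1}\in\GL_*^{\varepsilon}(2,A)$ with $\delta(g^{-1})=\delta(g)^{-1}$.

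To identify $\det_*$ with $\delta$, I would compute the $(1,2)$ entry of $g^*Jg$ directly, obtaining $a^*d+\varepsilon c^*b=\delta(g)$, and compare with the $(1,2)$ entry of $gJg^*$, which by direct multiplication equals $ad^*+\varepsilon bc^*=\det_*(g)$. The key observation is that for invertible $g$ the equations $g^*Jg=\delta(g)J$ and $gJg^*=\delta(g)J$ are equivalent: inverting the first and applying $J^{-1}=\varepsilon J$ and $(g^{-1})^*=(g^*)^{-1}$ rearranges it into the second with the same scalar. Reading off the $(1,2)$ entry of the second then gives $\det_*(g)=\delta(g)$, and combined with $\delta(gh)=\delta(g)\delta(h)$ this makes $\det_*$ a group homomorphism into $Z(A)\cap A^{\varepsilon\text{-}sym}\cap A^\times$.

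For surjectivity, I would exhibit explicit preimages. Given a central $\varepsilon$-symmetric invertible $\lambda$, the diagonal matrix $g_\lambda=\left(\begin{matrix}\lambda & 0 \\ 0 & 1\end{matrix}\right)$ (or, in the case where the sign forces it, an antidiagonal variant such as $\left(\begin{matrix}0 & 1 \\ \varepsilon\lambda & 0\end{matrix}\right)$) satisfies, by direct computation using $\lambda^*=-\varepsilon\lambda$, the relation $g_\lambda^*Jg_\lambda=\lambda J$, whence $\det_*(g_\lambda)=\lambda$. The only mild obstacle I anticipate is the uniform bookkeeping required to choose the correct explicit lift across both values of $\varepsilon$; this is elementary and purely a matter of juggling the $\varepsilon$-symmetry condition with the shape of $J$, so once dispatched the epimorphism claim, and with it the entire proposition, follows.
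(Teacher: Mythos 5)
The paper itself gives no proof of this proposition --- it is quoted from Pantoja and Soto-Andrade \cite{comm} ("Then they proved the following result") --- so there is no in-paper argument to compare against. Your proof is correct and is essentially the standard argument from that reference: rewrite the defining condition as $g^*Jg=\delta(g)J$, get multiplicativity and invertibility of $\delta$ from centrality of $\delta(g)$, identify $\det_*$ with $\delta$ by comparing the $(1,2)$ entries of $g^*Jg$ and $gJg^*$ (your passage between these two identities via inversion and $J^{-1}=\varepsilon J$ is exactly the right step, since the $(1,2)$ entry of $g^*Jg$ is $a^*d+\varepsilon c^*b$ while that of $gJg^*$ is $ad^*+\varepsilon bc^*=\det_*(g)$), and exhibit explicit lifts for surjectivity. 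One caveat about your ``short check'' that $\delta(g)\delta(h)$ is $\varepsilon$-symmetric: applying $*$ to $g^*Jg=\delta(g)J$ and using $J^*=\varepsilon J$ shows that $\delta(g)$ is automatically \emph{symmetric}, and a product of central symmetric elements is symmetric; under the paper's convention $a^*=-\varepsilon a$ this coincides with $\varepsilon$-symmetry only when $\varepsilon=-1$. For $\varepsilon=+1$ the requirement $\delta(g)\in Z(A)\cap A^{\varepsilon\text{-}sym}$ as literally written degenerates (it forces $2\delta(g)=0$), so both the statement and your proof should be read either with $\varepsilon=-1$ --- the only case the paper actually uses --- or with ``symmetric'' in place of ``$\varepsilon$-symmetric''. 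With that reading your surjectivity lift $g_\lambda=\left(\begin{smallmatrix}\lambda&0\\0&1\end{smallmatrix}\right)$ works verbatim, since $\lambda^*=\lambda$ gives $g_\lambda^*Jg_\lambda=\lambda J$ and $\det_*(g_\lambda)=\lambda$, and the antidiagonal variant is not needed.
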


\begin{defi}
 The group  $\SLc_*^{\varepsilon}(2,A)$ is the kernel of the epimorphism $\det_{*}$.
\end{defi}

\begin{rmk}
Notice that $\SLc_*^{\varepsilon}(2,A)$  is the group of matrices 
$$g=\left(\begin{matrix} a & b \\c& d \end{matrix}\right)$$
with entries in $A$   that satisfy  the following equalities: $a^*c=-\varepsilon c^*a$, $ab^*=-\varepsilon ba^*$, $b^*d=-\varepsilon d^*b$, 
 $cd^*=-\varepsilon dc^*$ and $ad^*+\varepsilon bc^*=a^*d+\varepsilon c^*b=1$.
 


\end{rmk}

\subsection{Weil Representation for $\SLc_*^{\varepsilon}(2, A)$}

Let $A$ be a unitary ring with an involution $*$.
We set
\[
h_t=\left(\begin{array}{cc}
t & 0\\
0 & t^{* -1}\end{array} \right) (t \in A^{\times}),\quad  w= \left(\begin{array}{cc}
0 & 1\\
\varepsilon 1& 0\end{array} \right),\quad u_s=\left(\begin{array}{cc}
1 & s \\
0 & 1 \end{array} \right) (s \in A^{\varepsilon\text{-}sym})
\]
\vspace{0.1cm}
\begin{defi}
\label{bruhatpres} Let $A$ be a unitary ring  equipped with an involution $*$.
We will say that $\SLc_*^{\varepsilon}(2,A)$ has a Bruhat-like  presentation if it is
generated by the above elements with defining relations:
\begin{enumerate}
\item $h_t h_{t'}=h_{tt'}$, $u_s u_{s'}=u_{s+s'}$;
\item $w^2= h_{\varepsilon}$; 
\item$h_t u_s= u_{tst^*}h_t$;
\item $wh_t=h_{t^{*-1}}w$;
\item$wu_{t^{-1}}wu_{-\varepsilon t}wu_{t^{-1}}=h_{-\varepsilon t},\quad t \in A^{\times}\cap A^{\varepsilon\text{-}sym}.$
\end{enumerate}
\end{defi}


Let us suppose that the involutive ring $A$ is finite and that the group $G= \SLc_*^{\varepsilon}(2, A)$ has a Bruhat-like  presentation.

\begin{defi} 

Let $M$ be a finite right $A$-module and let us consider  a bi-additive map  $\chi: M \times M \longrightarrow \mathbb{C}^{\times}$,  a linear character $\alpha \in \widehat{A}^{\times}$, a function $\gamma: A^{\varepsilon\text{-}sym} \times M \longrightarrow  \mathbb{C}^{\times} $  and a complex number $c$. We will say that $(M,\chi,\gamma,c)$ is a data for  $\SLc_*^{\varepsilon}(2, A)$  if the following properties hold:

\begin{enumerate}
\label{genweilrep}
\item For all $x,y \in M, t \in A^{\times}$,
\begin{enumerate}
\item $\chi(xt,y)=\alpha(tt^*)\chi(x,yt^*)$.
\item $\chi(y,x)=\chi(- \varepsilon x,y)$.
\item $\chi(x,y)=1$ for any $x \in M \Rightarrow y=0$.
\end{enumerate}

\item For all 
$s,s' \in A^{\varepsilon\text{-}sym}, x \in M$,
\begin{enumerate}
\item $\gamma(s+s',x)=\gamma(s,x)\gamma(s',x)$.
\item $\gamma(b,xt)=\gamma(tbt^*,x)$.
\item $\gamma(t,x+z)=\gamma(t,x)\gamma(t,z)\chi(x,zt)$.
\end{enumerate}

\item A complex number  $c$ such that $c^2 \vert M \vert=\alpha(\varepsilon)$, 
and for all $s \in A^{\varepsilon\text{-}sym} \cap A^{\times}$ the following equality holds:\\
$$\sum_{y \in M}\gamma(s,y)=\dfrac{\alpha(\varepsilon s)}{c}.$$
\end{enumerate}
\end{defi}

We present now a theorem due to Guti\'errez Frez, Pantoja and Soto-Andrade which  gives a method to construct  Weil representations for $\SLc_*^{\varepsilon}(2, A)$  when  a Bruhat-like presentation and  a data  $(M,\chi,\gamma,c)$  for the group are provided.
\begin{thm}\label{gen}
Let us assume that $\SLc_*^{\varepsilon}(2,A)$ has a Bruhat-like presentation. If  $(M,\chi,\gamma,c)$ is a data for 
the group then there exists  a (linear) representation $(\mathbb{C}^{M},\rho)$ of $\SLc_*^{\varepsilon}(2,A)$ defined on the generators by:
 \begin{enumerate}
\item $\rho_{u_b}(e_a)=\gamma(b,a)e_a$,
\item $\rho_{h_t}(e_a)=\alpha(t)e_{at^{-1}}$,
\item $\rho_w(e_a)=c\displaystyle{\sum_{b\in M}\chi(- \varepsilon a,b)e_b}$,
 \end{enumerate}
for $a\in M$, $b\in A^{\varepsilon\text{-}sym}$, $t\in A^{\times}$ and $e_a$ the   Dirac delta function at $a$, defined by $e_a(u)=1$ if $u=a$ and $e_a(u)=0$ otherwise.

This representation is called  generalized Weil representation of $\SLc_*^{\varepsilon}(2,A)$ associated to the data $(M,\chi,\gamma,c)$.
\end{thm}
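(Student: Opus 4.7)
The plan is to invoke the universal property of the Bruhat-like presentation: since by hypothesis $\SLc_*^{\varepsilon}(2,A)$ is generated by $\{u_s\}, \{h_t\}, w$ subject to the five relations in Definition~\ref{bruhatpres}, it suffices to check that the endomorphisms $\rho_{u_s}, \rho_{h_t}, \rho_w$ of $\mathbb{C}^{M}$ defined on the Dirac basis by the given formulas satisfy exactly those relations. Invertibility is transparent (for instance $\rho_{h_t}$ permutes basis vectors up to a scalar), so the task reduces to a case-by-case verification on the generators.

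The relations in family (1), together with (3) and (4), are structural and follow at once from the algebraic axioms of the data. The equality $\rho_{u_s}\rho_{u_{s'}} = \rho_{u_{s+s'}}$ is immediate from axiom 2(a), and $\rho_{h_t}\rho_{h_{t'}} = \rho_{h_{tt'}}$ from the multiplicativity of $\alpha$. Relation (3) unpacks to $\gamma(s,a) = \gamma(tst^{*}, at^{-1})$, which is precisely axiom 2(b) applied with $x=at^{-1}$. Relation (4) reduces, after reindexing the sum defining $\rho_w$ and using axiom 1(a), to the bicharacter identity $\chi(-\varepsilon a t^{-1}, b) = \alpha((tt^{*})^{-1})\chi(-\varepsilon a, b t^{*-1})$.

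Relation (2), $\rho_{w}^{2} = \rho_{h_{\varepsilon}}$, already requires some nontrivial input. Expanding, one obtains $c^{2}\sum_{b,b'}\chi(-\varepsilon a, b)\chi(-\varepsilon b, b')\,e_{b'}$. Using axiom 1(b) together with biadditivity one rewrites $\chi(-\varepsilon b, b') = \chi(b', b)$, and the inner sum collapses to $\sum_{b}\chi(-\varepsilon a + b', b)$. Non-degeneracy 1(c), combined with 1(b), yields the standard Fourier orthogonality of $\chi$ on the finite abelian group $(M,+)$: the sum is $|M|$ when $b' = \varepsilon a$ and zero otherwise. Matching the result with $\rho_{h_{\varepsilon}}(e_{a}) = \alpha(\varepsilon)\,e_{\varepsilon a}$ is exactly the normalization condition $c^{2}|M|=\alpha(\varepsilon)$ prescribed by the data.

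The main obstacle is relation (5), the triple-$w$ identity $\rho_{w}\rho_{u_{t^{-1}}}\rho_{w}\rho_{u_{-\varepsilon t}}\rho_{w}\rho_{u_{t^{-1}}} = \rho_{h_{-\varepsilon t}}$ for $t\in A^{\times}\cap A^{\varepsilon\text{-}sym}$, which plays the role of the Fourier-inversion/Gauss-sum identity of the classical Weil construction. Expanding on a basis vector $e_{a}$ produces a quadruple sum over $M$ whose summand interlaces the bicharacter $\chi$ with the quadratic function $\gamma$. The strategy is to repeatedly apply axiom 2(c), rewriting $\gamma(s,x)\gamma(s,z)\chi(x,zs) = \gamma(s,x+z)$ to ``complete the square'' inside the summations, and to perform judicious changes of variable that peel off the outer sums one at a time. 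The innermost sum should then reduce to a Gauss-type sum $\sum_{y\in M}\gamma(t^{-1},y)$, which by the data axiom evaluates to $\alpha(\varepsilon t^{-1})/c$. Tracking the accumulated powers of $\alpha$ and a final application of the normalization $c^{2}|M|=\alpha(\varepsilon)$ should then collapse the expression to $\alpha(-\varepsilon t)\,e_{a(-\varepsilon t)^{-1}} = \rho_{h_{-\varepsilon t}}(e_{a})$. The delicacy lies in orchestrating the interleaved uses of axioms 1(a)--(c) and 2(a)--(c) so that every intermediate partial sum is in precisely the form demanded by the Gauss sum axiom.
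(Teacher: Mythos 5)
You should first be aware that the paper itself does not prove this theorem: it is imported verbatim from Guti\'errez--Pantoja--Soto-Andrade (Theorem 4.4 of the reference cited as \cite{gpsa_1}), so there is no in-paper proof to compare against. That said, your plan is exactly the argument that the cited source carries out: use the universal property of the Bruhat-like presentation and verify the five relations on the Dirac basis. Your treatments of relations (1), (3) and (4) are correct and complete as written (relation (3) is indeed axiom 2(b) at $x=at^{-1}$, and relation (4) is axiom 1(a) applied with $t^{-1}$ after reindexing the sum). Your verification of $\rho_w^2=\rho_{h_\varepsilon}$ is also right, including the small point that nondegeneracy in the second variable of $\chi$ transfers to the first variable via 1(b), which is what makes the orthogonality relation $\sum_b\chi(z,b)=0$ for $z\neq 0$ legitimate.

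The one genuine shortfall is relation (5). You correctly identify it as the crux and correctly name the tools (axiom 2(c) to complete the square, the Gauss-sum axiom $\sum_y\gamma(s,y)=\alpha(\varepsilon s)/c$, and the normalization $c^2|M|=\alpha(\varepsilon)$), but the paragraph is a plan, not a proof: phrases like ``should then reduce'' and ``should then collapse'' are doing the work. The expansion of $\rho_w\rho_{u_{t^{-1}}}\rho_w\rho_{u_{-\varepsilon t}}\rho_w\rho_{u_{t^{-1}}}(e_a)$ involves three nested sums, and the Gauss-sum axiom must be invoked more than once with different symmetric arguments ($t^{-1}$ and $-\varepsilon t$, both of which you should explicitly check lie in $A^{\varepsilon\text{-}sym}\cap A^{\times}$ when $t$ does, using $\varepsilon^2=1$); the accumulated factors of $\alpha$ and $c$ must then be matched against $\alpha(-\varepsilon t)$. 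None of this is conceptually in doubt, but it is precisely the computation that distinguishes a data $(M,\chi,\gamma,c)$ from an arbitrary quadruple, so leaving it at the level of strategy means the proof is incomplete at its only nontrivial point. To finish, you would either carry out that calculation in full or do what the paper does and cite Theorem 4.4 of \cite{gpsa_1}, where it is performed.
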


\section{ The group $\uni(n,n)\left(\F_{q^2}/\F_q\right)$}

In what follows we introduce the unitary group $\uni(n,n)\left(\F_{q^2}/\F_q\right)$ attached to a unitary space over a finite field. We also show that this group can be regarded as a generalized classical group $\SLc_*^{-1}(2, A)$ for a suitable  involutive ring defined in the previous section.

\vspace{0.4 cm }

Let $K=\F_{q^2}$ be a quadratic extension of the finite field  $k=\F_q$, where $q$ is odd. 
Let us denote $\lambda\mapsto \overline{\lambda}$ the nontrivial $k$-automorphism of $K$. For $y=(y_1,y_2,...,y_l) \in K^l$, we put $y^*=(\overline{y_1},\overline{y_2},...,\overline{y_l})^{\tr}$, where $\tr$ denotes the transposition.

We consider  the  $K$-vector space $F=K^{2n}$ whose elements are  row vectors. We define  the hermitian form $h:F \times F \rightarrow K$ given by $h(x,y)=xJ_{+}y^*$, where $J_+=\left [\begin{smallmatrix}0& I_{n}\\I_{n}& 0\end{smallmatrix}\right]$. Since all nondegenerate hermitian bilinear forms over a finite field are equivalent \cite {grove}, we can see the quasi-split unitary group  $\uni(n,n)(K/k)$ as the isometry group of the form $h$.

Henceforth we will use  $A_n$ to indicate the  the full matrix ring $\m_{n}(K)$  endowed with the involution $*$ defined by $(a^*)_{ij}=\overline{a_{ji}}$. 

\begin{thm}
The quasi-split unitary group  $\uni(n,n)(K/k)$ is isomorphic to $\SLc_*^{-1}(2,A_n).$
\end{thm}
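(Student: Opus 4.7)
The plan is to identify $\GL_{2n}(K)$ with $\GL_{2}(A_{n})$ via the $n\times n$ block partition, realize both $\uni(n,n)(K/k)$ and $\SLc_{*}^{-1}(2,A_{n})$ as subgroups of this common ambient group, and exhibit an explicit inner automorphism of $\GL_{2n}(K)$ that conjugates one onto the other.

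First I would fix $\alpha\in K^{\times}$ with $\bar{\alpha}=-\alpha$; such an element exists because $q$ is odd (for instance $\alpha=\sqrt{\delta}$ for any non-square $\delta\in k^{\times}$). Then I set
$$T=\begin{pmatrix} I_{n} & 0\\ 0 & \alpha I_{n}\end{pmatrix}\in\GL_{2n}(K)$$
and consider the inner automorphism $\phi(g)=T^{-1}gT$ of $\GL_{2n}(K)$. A short block computation shows that if $g=\left(\begin{smallmatrix} a & b\\ c & d\end{smallmatrix}\right)$ with $a,b,c,d\in A_{n}$, then $\phi(g)=\left(\begin{smallmatrix} a & \alpha b\\ \alpha^{-1}c & d\end{smallmatrix}\right)$. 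Being conjugation, $\phi$ is a bijective group homomorphism of $\GL_{2n}(K)$ onto itself.

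The core of the argument is then a direct block-matrix verification. The hermitian relation $gJ_{+}g^{*}=J_{+}$ expands into the four block identities $ab^{*}+ba^{*}=0$, $cd^{*}+dc^{*}=0$, $ad^{*}+bc^{*}=I_{n}$, $da^{*}+cb^{*}=I_{n}$; and its equivalent companion $g^{*}J_{+}g=J_{+}$ (equivalent because $J_{+}^{2}=I_{2n}$) gives the further identities $a^{*}c+c^{*}a=0$, $b^{*}d+d^{*}b=0$, $a^{*}d+c^{*}b=I_{n}$ and $d^{*}a+b^{*}c=I_{n}$. I would check one by one that these are equivalent to the six defining relations of $\SLc_{*}^{-1}(2,A_{n})$ listed in the Remark following Definition~2.2, applied to $\phi(g)$. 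The key mechanism is that every $\alpha$ introduced by $\phi$ meets a $\bar{\alpha}=-\alpha$ through the involution $*$, so the two factors combine to produce a sign change that flips a ``$+$'' in a hermitian identity into the ``$-$'' demanded by $\SLc_{*}^{-1}$; for example, $\phi(g)_{11}\phi(g)_{12}^{*}=\phi(g)_{12}\phi(g)_{11}^{*}$ reads $a(\alpha b)^{*}=(\alpha b)a^{*}$, i.e.\ $-\alpha\,ab^{*}=\alpha\,ba^{*}$, which is the hermitian relation $ab^{*}+ba^{*}=0$; and $\det_{*}\phi(g)=1$ reads $ad^{*}-(\alpha b)(\alpha^{-1}c)^{*}=I_{n}$, which simplifies to $ad^{*}+bc^{*}=I_{n}$.

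Combining the computations, $\phi$ restricts to a bijection $\uni(n,n)(K/k)\to\SLc_{*}^{-1}(2,A_{n})$, and being an inner automorphism it is a group homomorphism; hence it is the desired isomorphism. The main obstacle is purely the bookkeeping of the six relations, with the careful tracking of the placement of the involution $*$ and of the factors $\alpha$ and $\bar{\alpha}=-\alpha$ doing all the work; no conceptual difficulty is involved once one hits on the choice of the matrix $T$.
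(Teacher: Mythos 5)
Your proof is correct, and it rests on the same underlying observation as the paper's --- that the hermitian form $h$ and the skew-hermitian form whose isometry group is $\SLc_*^{-1}(2,A_n)$ differ by a change of basis together with a scalar $\alpha$ satisfying $\bar{\alpha}=-\alpha$ --- but you execute it by a genuinely more self-contained route. The paper's proof is two sentences: it identifies $\SLc_*^{-1}(2,A_n)$ with the isometry group of $i(x,y)=xJ_-y^*$, cites Dieudonn\'e for a scalar $\lambda$ making $\lambda\cdot i$ hermitian, and then invokes the classification of nondegenerate hermitian forms over finite fields to get equivalence with $h$, hence conjugate isometry groups. Your matrix $T=\left(\begin{smallmatrix}I_n&0\\0&\alpha I_n\end{smallmatrix}\right)$ packages both citations into one explicit conjugation: one checks that $T^{-1}J_+(T^*)^{-1}=-\alpha^{-1}J_-$, so conjugation by $T$ carries the isometry group of $J_+$ onto that of $-\alpha^{-1}J_-$, which equals the isometry group of $J_-$ because rescaling a form does not change its isometries. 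What your version buys is independence from the two cited classification facts and an explicit isomorphism (useful, e.g., for the compatibility computation of Section 6); what it costs is the bookkeeping of the block relations, and your sample verifications (such as $a(\alpha b)^*=(\alpha b)a^*$ reducing to $ab^*+ba^*=0$, and $\det_*\phi(g)=ad^*+bc^*$) are correct and the remaining ones go through identically. The one step you share with the paper, and which neither argument spells out, is the identification of the six relations of the Remark with the condition $gJ_-g^*=J_-$ plus $\det_*(g)=1$; that too is a routine block computation, but it deserves at least a sentence.
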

\begin{proof}
$\SLc_*^{-1}(2,A_n)$ is the group of isometries of the nondegenerate skew-hermitian form $i$ given by $i(x,y)=xJ_{-}y^*$, where $J_-=\left [\begin{smallmatrix}0& I_{n}\\-I_{n}& 0\end{smallmatrix}\right]$. According  to \cite{dieu} there is $\lambda \in K^{\times}$ such that $\lambda \cdot i$ is a nondegenerate hermitian form and therefore $\lambda \cdot i$ is equivalent to the form $h$ above. Thus $\SLc_*^{-1}(2,A_n)$ is isomorphic to  $\uni(n,n)(K/k)$.

\end{proof}

According to  \cite{manusc} Theorem 15 and the  result above, the group $\uni(n,n)(K/k)$  has a Bruhat-like presentation when $q>3$. So we will construct a data for this group in order to obtain a generalized Weil representation of  $\uni(n,n)(K/k)$.

\section{Data for  $\SLc_*^{-1}(2,A_n)$}
The goal in this section is to construct a concrete data for the group   $\uni(n,n)(K/k)=\SLc_*^{-1}(2,A_n)$. This will allow us to get a generalized Weil representation of  $\SLc_*^{-1}(2,A_n)$ by  using  the method  via presentation given in section two.\\

Let $V$ be the $K$-vector space given by $K^n$. We consider the pairing $\langle\quad,\quad\rangle:V\times V\to K$ given by:
\[
\langle u,v\rangle=uv^{*}.
\]
We observe directly that this pairing is nondegenerate. We also  see that  $\langle xa,y\rangle =\langle x,ya^*\rangle$, for any $a\in A_n^{\times}$.

Let  $M=V$ be the right  $A_n$-module with the $A_n$-action given by the right multiplication. Let us denote by $\sign$ the unique nontrivial character of $K^{\times}$ whose square is  trivial. \\
Let $\psi$ be a nontrivial character of $K^+$ such that $\psi(\lambda)=\psi(\overline{\lambda})$ for all $\lambda \in K^+.$
\vspace{0.5 cm}

We consider the   bi-additive function $\chi:M \times M \longrightarrow {\C}$  given by $\chi((x,y))= \psi(\langle x,y\rangle)$
and the linear  character of $A_n^{\times}$ given by 
$$\alpha(t)=\sign(\det(t)) \; \text{for} \; t\in A_n^{\times}.$$

We show then the following result.

\begin{prop} \label{chi}
\begin{enumerate} The map $\chi$ satisfies the conditions:
\item $\chi((xa,y))=\alpha(aa^*)\chi((x,ya^{*}))$, for any $x,y\in M$ and any $a\in A_n$. 
\item $\chi((y,x))=\chi((x,y)),$ for any $x,y\in M$.
\item $\chi$ is nondegenerate.
\end{enumerate}
\end{prop}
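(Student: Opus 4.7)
The plan is to unpack the three assertions directly from the definitions $\chi(x,y)=\psi(xy^{*})$, $\alpha(t)=\sign(\det t)$, and then use a couple of elementary facts about the pair $K/k$ and the character $\psi$ to finish. None of the three parts looks deep, but part (1) hides a small arithmetic observation which I will isolate below.

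First I would settle part (1). Using the definitions, the left hand side is $\psi((xa)y^{*})=\psi(xay^{*})$, and the right hand side is $\alpha(aa^{*})\psi(x(ya^{*})^{*})$. A direct check, using that $(\;)^{*}$ on vectors is $\overline{(\;)}^{\tr}$ and on matrices is conjugate transpose, gives $(ya^{*})^{*}=ay^{*}$, so $\psi(x(ya^{*})^{*})=\psi(xay^{*})$ as well. Hence the claimed identity reduces to $\alpha(aa^{*})=1$. Here I use that $\det(a^{*})=\overline{\det a}$, so $\det(aa^{*})=\det(a)\,\overline{\det a}=N_{K/k}(\det a)\in k^{\times}$; thus it is enough to show $\sign|_{k^{\times}}\equiv 1$. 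Since $q$ is odd, $q+1$ is even, and if $g$ generates the cyclic group $K^{\times}$ then $k^{\times}=\langle g^{q+1}\rangle\subseteq (K^{\times})^{2}$. Consequently $k^{\times}$ lies in the kernel of $\sign$, and $\alpha(aa^{*})=1$.

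Next I would handle (2). Writing $xy^{*}=\sum_{i}x_{i}\overline{y_{i}}$ and $yx^{*}=\sum_{i}y_{i}\overline{x_{i}}$ shows that $yx^{*}=\overline{xy^{*}}$. Since $\psi$ was chosen to satisfy $\psi(\lambda)=\psi(\overline{\lambda})$ for every $\lambda\in K$, one obtains $\chi(y,x)=\psi(yx^{*})=\psi(\overline{xy^{*}})=\psi(xy^{*})=\chi(x,y)$. Note the sign $\varepsilon=-1$ absorbs into the statement because it only contributes conjugation.

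Finally, for (3) I would argue as follows. Suppose $y\in V$ satisfies $\chi(x,y)=1$ for every $x\in V$. For fixed $y\neq 0$ the map $x\mapsto\langle x,y\rangle=xy^{*}$ is a nonzero $K$-linear form on $V$, and hence surjective onto $K$. Then the hypothesis would force $\psi\equiv 1$ on $K$, contradicting nontriviality of $\psi$. Therefore $y=0$.

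The only step with any content is the verification $\alpha(aa^{*})=1$; the rest is bookkeeping with the definitions and the hypothesis on $\psi$.
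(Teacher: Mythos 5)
Your proof is correct and follows essentially the same route as the paper: unwinding the definitions for (1) and (2), using $*$-invariance of $\psi$, and deducing (3) from surjectivity of the nonzero linear form $\langle\,\cdot\,,y\rangle$. The only difference is that you actually justify $\alpha(aa^{*})=1$ (via $\det(aa^{*})=N_{K/k}(\det a)\in k^{\times}\subseteq (K^{\times})^{2}$), whereas the paper simply asserts it from the definition of $\alpha$; this is a welcome addition, not a divergence.
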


\begin{proof}
From definition of $\alpha$ we obtain $\alpha(aa^*)=1$ for every $a \in A_n$. For $x,y \in M$ and  $a\in A_n$, we see that,
\begin{enumerate}
\item 
\[
 \chi ((xa,y))= \psi(\langle xa,y\rangle)  
=\psi(\langle x,ya^{*}\rangle)  
= \chi ((x,ya^{*})).
\]

\item  
\[
\chi ((y,x))= \psi(\langle y,x\rangle) \\
=\psi(yx^*)=\psi((xy^{*})^*) 
=\psi(\langle x,y\rangle).
\]

\item Finally, we prove that $\chi$ is nondegenerate. Suppose that $\chi ((x,y))=1$ for all $x \in M$. 
 If $y \neq 0$, we observe that  the linear functional $\langle \cdot \;, y \rangle:V \longrightarrow K$ is  nontrivial which implies it is surjective.
We consider then $\lambda \in K$ satisfying  $\psi (\lambda)\neq 1$, and $t\in M$ such that
$\lambda=\langle t,y\rangle$. Then we get
\begin{equation*}
1=\psi (\langle t,y\rangle)=\psi (\lambda)\neq 1,
\end{equation*}
which forces $y=0$. 
\end{enumerate}
\end{proof}

The corresponding properties for $\gamma$ are given by,
\begin{prop} \label{gamma} 
Let $\gamma: A_n^{\varepsilon\text{-}sym}\times M \longrightarrow {{\mathbb{C}}
^\times}$  the function given by 
$\gamma(u,x)=\psi(2^{-1}\langle xu,x\rangle)$. Then,
\begin{enumerate}
\item $\gamma(u+u^{\prime},x)=\gamma(u,x) \gamma(u^{\prime},x)$,   for  $x\in M$ and $u,u'\in A_n^{\varepsilon\text{-}sym}$. 
\item $ \gamma(u,xa)=\gamma(aua^{*},x)$, for  $x\in M$, $u\in A_n^{\varepsilon\text{-}sym}$ and $a\in A_n^{\times}$.

\item $\gamma(u,x+y)=\gamma(u,x)\gamma(u,y)\chi(x,yu)$, for  $x, y\in M$ and  $u\in A_n^{\varepsilon\text{-}sym}$. 

\end{enumerate}
\end{prop}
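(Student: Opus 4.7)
The plan is to verify each of the three identities by expanding the definition of $\gamma$ and reducing to algebraic identities for the pairing $\langle\,,\,\rangle$ combined with the properties of the character $\psi$ established in the preliminaries (notably $\psi(\overline\lambda)=\psi(\lambda)$ and the fact that, since $\varepsilon=-1$, every $u\in A_n^{\varepsilon\text{-sym}}$ satisfies $u^*=u$).

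For part (1), since $\langle\cdot,\cdot\rangle$ is additive in its first argument, I would write $\langle x(u+u'),x\rangle=\langle xu,x\rangle+\langle xu',x\rangle$ and use that $\psi$ is a character on $K^+$. For part (2), I would compute directly: $\langle xau,xa\rangle=(xau)(xa)^{*}=xaua^{*}x^{*}=\langle x(aua^{*}),x\rangle$, so the equality of $\gamma(u,xa)$ and $\gamma(aua^{*},x)$ is immediate from the definition (and in fact does not require invertibility of $a$).

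Part (3) is the only one that uses something beyond bookkeeping. Expanding
\[
\langle (x+y)u,x+y\rangle=\langle xu,x\rangle+\langle yu,y\rangle+\langle xu,y\rangle+\langle yu,x\rangle,
\]
the first two summands give $\gamma(u,x)\gamma(u,y)$ after dividing by $2$ and applying $\psi$. It remains to identify
\[
\psi\bigl(2^{-1}(\langle xu,y\rangle+\langle yu,x\rangle)\bigr)\;\text{with}\;\chi(x,yu)=\psi(\langle x,yu\rangle).
\]
The key observation is that, using $u^{*}=u$, a direct computation gives $\langle xu,y\rangle=xuy^{*}=\langle x,yu\rangle$ and $\langle yu,x\rangle=yux^{*}=(xuy^{*})^{*}=\overline{\langle xu,y\rangle}$. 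Hence the two cross terms are Galois conjugates. Since $2^{-1}\in k$ is fixed by the Galois involution, the property $\psi(\overline\lambda)=\psi(\lambda)$ forces
\[
\psi\bigl(2^{-1}\langle yu,x\rangle\bigr)=\psi\bigl(2^{-1}\langle xu,y\rangle\bigr),
\]
so the product of the two cross contributions collapses to $\psi(\langle xu,y\rangle)=\psi(\langle x,yu\rangle)=\chi(x,yu)$, as required.

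The only nontrivial step is the one in part (3), where one must simultaneously exploit the Hermitian condition $u^{*}=u$ and the Galois-invariance of $\psi$; the other two identities are essentially formal consequences of the definitions. No obstacle is expected beyond keeping track of the adjoint $*$ when transferring factors from one side of the pairing to the other.
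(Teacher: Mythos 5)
Your proof is correct and follows essentially the same route as the paper: parts (1) and (2) by direct expansion of the pairing, and part (3) by isolating the cross terms $\langle xu,y\rangle$ and $\langle yu,x\rangle$, observing via $u^{*}=u$ (from $\varepsilon=-1$) that they are Galois conjugates, and invoking $\psi(\overline\lambda)=\psi(\lambda)$ to combine them into $\chi(x,yu)$. The only cosmetic difference is that the paper routes part (2) through the adjoint identity $\langle xa,y\rangle=\langle x,ya^{*}\rangle$ rather than expanding the definition, which changes nothing.
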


\begin{proof}
Let $u,u^{\prime }\in A_n^{\varepsilon\text{-}sym}$, $a\in A_n^{\times}$, $x,y \in M,$

\begin{enumerate}
\item From additivity of $\langle\quad,\quad\rangle$, we see that 
\[
  \gamma(u+u^{\prime},x) 
=\psi(2^{-1}(\langle xu+xu^{\prime},x\rangle)) \\
=\psi(2^{-1}\langle x,xu\rangle)\psi(2^{-1}\langle x,xu^{\prime}\rangle)\\
=\gamma((u,x)) \gamma((u^{\prime },x)).
\]

\item  By Proposition \ref{chi} part 2, we have 
\[
 \gamma(u,xa)= \psi(2^{-1}\langle xau,xa\rangle) 
=\psi(2^{-1}\langle xaua^{*},x\rangle) 
=\gamma((aua^{*},x)).
\]
\item From  definition we see
\[
\gamma(u,x+y)=\psi(2^{-1}(xu+yu)(x+y)^*)\\
=\gamma((u,x))\gamma((u,y))\psi(2^{-1}(xuy^*+yux^*)) \\
\]
Since $\psi$ is an additive character of $K$ and it is $*$-invariant, we obtain
\[
\psi(2^{-1}(xuy^*+yux^*))=\psi(2^{-1}xuy^*)\psi(2^{-1}yux^*)=\psi(2^{-1}xuy^*)\psi(2^{-1}(x(yu)^*)^*)=\psi(xuy^*)=\chi(x,yu).
 \]
\end{enumerate}
\end{proof}

For any invertible symmetric element $u$ in $A_n^{-1-sym}$, we define $Q_u:K^{n}\longrightarrow k$ given by $Q_u(x)=\langle xu,x\rangle$. Then we prove,


\begin{lem} \label{equiv form}
The map  $Q_u$ is a nondegenerate and nonsplit $k$-quadratic form on $K^{n}$. Furthermore,  the  forms $Q_u$, $Q_{u'}$ are equivalent for any  $u, u'\in A_n^{\times}\cap A_n^{\varepsilon\text{-}sym}$. 
\end{lem}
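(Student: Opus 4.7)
The plan is to dispatch the four claims of the lemma by first handling the (routine) observations that $Q_u$ is $k$-valued and quadratic, then proving the equivalence $Q_u \sim Q_{u'}$, which lets me reduce the remaining two assertions to the single model case $u = I_n$, and only then tackling nondegeneracy and the nonsplit property in that model case.

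For the $k$-valuedness and quadraticity: since $\varepsilon = -1$ forces $u^* = u$, the scalar $Q_u(x) = xux^*$ satisfies $(xux^*)^* = xu^*x^* = xux^*$, so $Q_u(x) = \overline{Q_u(x)} \in k$, and for $\lambda \in k$ one has $Q_u(\lambda x) = \lambda\overline{\lambda}\,xux^* = \lambda^2 Q_u(x)$, so $Q_u$ is a genuine quadratic form on $V = K^n$ viewed as a $k$-space of dimension $2n$. Its associated symmetric $k$-bilinear form is $B_u(x,y) = \tfrac{1}{2}(xuy^* + yux^*) = \tfrac{1}{2}\operatorname{Tr}_{K/k}(xuy^*)$, which will be convenient below.

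For the equivalence $Q_u \sim Q_{u'}$, I would invoke the standard fact that any two nondegenerate hermitian forms on $K^n$ over a finite quadratic extension are equivalent (diagonalize and use surjectivity of $N\colon K^{\times}\to k^{\times}$ to absorb the diagonal coefficients). This yields $g \in \GL_n(K)$ with $gug^* = u'$, whence $Q_{u'}(x) = x(gug^*)x^* = (xg)u(xg)^* = Q_u(xg)$, and the map $x \mapsto xg$ is a $K$-linear (hence $k$-linear) automorphism of $V$, providing the $k$-equivalence. In particular, for nondegeneracy and nonsplit-ness we may restrict to $u = I_n$. Nondegeneracy is then immediate: if $\operatorname{Tr}_{K/k}(xy^*) = 0$ for every $y$ and $x \ne 0$, the $K$-linear functional $y \mapsto xy^*$ is surjective onto $K$, which together with surjectivity of $\operatorname{Tr}_{K/k}$ yields a contradiction.

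The main obstacle is the nonsplit assertion. I would identify $Q_{I_n}$ with the orthogonal sum of $n$ copies of the norm form $N(a+b\alpha) = a^2 - \delta b^2$, where $\{1,\alpha\}$ is a $k$-basis of $K$ with $\alpha^2 = \delta$ a fixed nonsquare in $k$; in the induced basis the Gram matrix of $Q_{I_n}$ is block-diagonal with blocks $\operatorname{diag}(1,-\delta)$. The cleanest way to pin down the type is via the Gauss sum $\sum_{x\in K}\psi(N(x)) = -q$, obtained by partitioning $K$ along the fibres of $N$ (each nonzero value is attained by exactly $q+1$ elements) and collapsing against $\sum_{t\in k^{\times}}\psi(t) = -1$. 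Multiplicativity then yields $\sum_{y\in V}\psi(Q_{I_n}(y)) = (-q)^n$, and the standard dictionary between the sign of this sum and the $O^{+}/O^{-}$ dichotomy over a finite field of odd characteristic identifies the type. This character-sum identity is also exactly what will be reused in the next section to verify the third axiom in the definition of a data for $\SLc_*^{-1}(2,A_n)$, so proving it cleanly here is the right investment.
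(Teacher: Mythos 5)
Your treatment of the $k$-valuedness, the quadratic homogeneity, the nondegeneracy and the equivalence $Q_u\sim Q_{u'}$ is correct and is essentially the paper's own argument: the paper likewise deduces the equivalence from the congruence of all nondegenerate hermitian matrices over a finite field (producing $j$ with $juj^{*}=u'$ and $Q_{u'}(x)=Q_u(xj)$), and it proves nondegeneracy by the same mechanism you use --- the nonzero functional $x\mapsto\langle x,yu\rangle$ is surjective onto $K$, so its values cannot all lie in the trace-zero line --- except that the paper argues directly for a general $u$ rather than first reducing to $u=I_n$. Those parts need no further comment.

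The genuine problem is the nonsplit assertion, which is precisely the one claim the paper's proof silently omits and the one your argument cannot deliver. Your character-sum computation is right: $\sum_{x\in K}\psi(N_{K/k}(x))=1+(q+1)\sum_{t\in k^{\times}}\psi(t)=-q$ (you should also note that $\psi|_{k}$ is nontrivial, which follows from $\psi(\lambda)=\psi(\overline{\lambda})$, $\psi$ nontrivial and $q$ odd), whence $\sum_{y\in K^{n}}\psi(Q_{I_n}(y))=(-q)^{n}$. But you stop at ``the standard dictionary identifies the type'' without applying it, and if you do apply it the sign is $(-1)^{n}$: the form is of minus type exactly when $n$ is odd. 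This is consistent with your own block-diagonal description: $Q_{I_n}$ is the orthogonal sum of $n$ copies of the anisotropic plane $\langle 1,-\delta\rangle$, and the sum of two such planes has rank $4$ and square discriminant, hence is hyperbolic over $\F_q$; so for even $n$ the form $Q_u$ is split, and the ``nonsplit'' clause of the lemma is in fact false in that case. Carried to its conclusion, your proposal disproves rather than proves this part of the statement. The same sign discrepancy propagates forward: your correct value $(-q)^{n}$ conflicts with the paper's asserted $-q^{n}$, and the constant in the data should then be $c=(-1)^{n}/q^{n}$ rather than $-1/q^{n}$ when $n$ is even (only the equivalence of the forms $Q_u$ and the absolute value $q^{n}$ of the Gauss sum are used elsewhere, but the sign of $c$ does enter the definition of $\rho_w$). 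You should flag this explicitly rather than leave the final identification of the type implicit.
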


\begin{dem} 
 We observe directly that $Q_u(\lambda x)=\lambda^{2}Q_u(x)$ for any $\lambda \in k$ and $x \in K^{n}$.
Now, let us consider the form $B_u$ associated to $Q_u$.
$$B_u(x,y)=Q_u(x+y)-Q_u(x)-Q_u(y)= \langle x,yu\rangle+\overline{\langle x,yu\rangle}$$ 
It is clear that $B_u$ is a symmetric bilinear form.
 To prove that $B_u$ is nondegenerate,  let us suppose that $B_u(x,y)=0$ for all $x \in K^{n}$. Then, for every $x \in K^{n}$, we have $\langle x,yu\rangle \in \lbrace \lambda\Delta / \lambda \in k \rbrace$. If $y \neq 0$, then the $K$-linear functional $\langle \cdot,yu\rangle:K^{n} \rightarrow K$ is  surjective. Thus for any $t\in k$, there exists $x_0 \in K^{n} $ such that $\langle x_0,yu\rangle =t$ which is impossible  and then $y=0$.\\

Finally, let  $u$, $u^{\prime}$ be two elements in $A_n^{\times} \cap A_n^{sym}$. 
 Since all nondegenerate hermitian forms in a finite field are equivalent and  $u$ and $u^{\prime}$  are invertible hermitian matrices, there exists $j \in A_n^{\times}$ such that $juj^{*}=u^{\prime}$. Thus we obtain
$Q_{u^{\prime}}(x)=Q_u(jx)$.\\
\end{dem}
\findem


\vspace{0.5cm}
Notice that $Q_{id}$ is the quadratic form given by the sum of the field norm of each coordinate. In order to facilitate calculations, we will consider only $Q_{-id}$ and denote it by $Q$.

We keep considering the above notations and set
\[
c= \dfrac{-1}{q^{n}}.
\]
Then we should prove 
\[
\sum_{x \in K^{n}} \psi(\langle x,x\rangle)= \dfrac{\alpha(-(-id))}{c}=\dfrac{1}{c}
\]
Indeed, the above sum is a Gauss sum corresponding to this quadratic form on a vector space of dimension $q^{2n}$ over $k$. Therefore the sum is $-q^{n}$ (see \cite{soto}), which is the desired result.

\vspace{1 cm}

Propositions \ref{chi}, \ref{gamma}  and Lemma \ref{equiv form} show that:
\begin{thm}
\label{data}
There exists a linear representation  $\left(\Aut(\mathbb{C}^{M}),\rho\right)$   of $ \SLc_*^{-1}(2,A_n)\cong\uni(n,n)(K/k)$  defined on the generators as follows: 
\begin{itemize}
\item $\rho_{u_b}(e_a)=\gamma(b,a)e_a$,
\item $\rho_{h_t}(e_a)=\alpha(t)e_{at^{-1}}$,
\item $\rho_w(e_a)=c\displaystyle{\sum_{b\in M}\chi(-a,b)e_b}$,
 \end{itemize} 
 for $a\in M$, $b\in A_n^{\varepsilon\text{-}sym}$, $t\in A_n^{\times}$ and $e_a$ the Dirac  delta function at $a$, defined by $e_a(u)=1$ if $u=a$ and $e_a(u)=0$ otherwise.
This representation is called the generalized  Weil representation of $\SLc_*^{-1}(2,A)\cong \uni(n,n)\left(\F_{q^2}/\F_q\right)$ associated to the data $(M,\chi,\gamma,-1/q^n)$.
\end{thm}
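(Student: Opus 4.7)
The plan is to deduce the theorem as a direct application of Theorem \ref{gen}. The Bruhat-like presentation demanded by that theorem is supplied by \cite{manusc} Theorem 15 together with Theorem 3.1, so the work reduces to verifying that the quadruple $(M,\chi,\gamma,-1/q^n)$ satisfies the three clauses of Definition \ref{genweilrep}.

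I would dispose first of clauses (1) and (2), the axioms on $\chi$ and $\gamma$. These are essentially algebraic manipulations with $\langle\,,\,\rangle$, bi-additivity, and the $*$-invariance of $\psi$ (the latter hypothesis being imposed precisely so that $\psi(xuy^*)$ and $\psi(yux^*)$ coincide in the polarization identity for $\gamma$). The only slightly subtle point is the appearance of $\alpha(aa^*)$ in the twist identity for $\chi$: it equals $\sign(N_{K/k}(\det a))$, which is trivial because $q+1$ is even and hence $N_{K/k}$ factors through $(K^\times)^2$. These verifications are carried out cleanly in Propositions \ref{chi} and \ref{gamma}.

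The heart of the argument is clause (3). The normalization $c^2|M|=\alpha(\varepsilon)$ is a one-line check: $(-1/q^n)^2\cdot q^{2n}=1=\sign((-1)^n)=\alpha(-\ide)$, using that $q$ odd forces $4\mid q^2-1$ and hence $-1\in(K^\times)^2$. The Gauss sum identity $\sum_{y\in M}\gamma(s,y)=\alpha(\varepsilon s)/c$ for every invertible $\varepsilon$-symmetric $s$ is the main content. Here Lemma \ref{equiv form} is the enabling ingredient: any two such $s,s'$ are related by $s'=jsj^*$ for some $j\in A_n^\times$, and the change of variable $y\mapsto yj^{-1}$ transports one Gauss sum into the other, reducing the problem to a single distinguished choice. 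Taking $s=-\ide$, the sum factors over the $n$ coordinates as the $n$-th power of a two-dimensional non-split Gauss sum $\sum_{\xi\in K}\psi(N_{K/k}(\xi))$, which evaluates to $-q$ by the standard computation reproduced in \cite{soto}, yielding the value $-q^n=1/c$ as required.

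The main obstacle, in my view, is keeping the bookkeeping for $\alpha$ correct when transporting between different hermitian $s$ in clause (3): after the substitution $y\mapsto yj^{-1}$ the sum a priori gains a Jacobian-type factor involving $\det(jj^*)$, and one must verify that this factor reconciles exactly with the required ratio $\alpha(\varepsilon s')/\alpha(\varepsilon s)$. Fortunately this balance follows from the very same observation that $\alpha$ is trivial on norms, which already powered clause (1). Once clause (3) is in hand, Theorem \ref{gen} applies and yields the representation with the stated formulas for $\rho_{u_b}$, $\rho_{h_t}$, and $\rho_w$.
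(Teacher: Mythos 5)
Your proposal matches the paper's proof: the paper likewise establishes the theorem by citing Propositions \ref{chi} and \ref{gamma}, Lemma \ref{equiv form}, and the Gauss sum evaluation $\sum_{x\in K^n}\psi(\langle x,x\rangle)=-q^n$ to verify that $(M,\chi,\gamma,-1/q^n)$ is a data, and then invokes Theorem \ref{gen} (Theorem 4.4 of \cite{gpsa_1}). Your extra commentary on the reduction to $s=-\ide$ via the equivalence of hermitian forms and the triviality of $\alpha$ on norms is exactly the mechanism the paper relies on, just spelled out more explicitly.
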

\begin{proof}
We see that propositions \ref{chi}, \ref{gamma}  and lemma \ref{equiv form} proves that $(M,\chi,\gamma, c)$ is a data for the group  $ \SLc_*^{-1}(2,A_n)$. Thus according to  \cite{gpsa_1} Theorem 4.4  we finally  obtain the desired representation.
\end{proof}


\section{ Initial Decomposition}

In this section we obtain  an initial decomposition of the Weil representation $(\mathbb{C}^M, \rho)$ of 
$G=\SLc_*^{\varepsilon}(2,A)$. In order to do this, we take advantage of the fact that there is a group of intertwining operators that acts naturally in $\mathbb{C}^M$, namely the unitary group $U(\chi,\gamma)$ of the pair $(\chi,\gamma)$.
To this purpose, we lean on \cite{gpsa_1} Theorem 7.6.

At the end of this section we draw on this general decomposition for our particular case $\uni(n,n)(K/k)$.

\begin{defi}
 The unitary group $\uni(\chi,\gamma)$ of the pair $(\chi,\gamma)$ is the group consisting of all $A_n$-linear automorphism $\varphi$ of $M$ such that $\gamma(b,\varphi(x))=\gamma(b,x)$, for any $b\in A_n$ and $x\in M $.
 \end{defi}
 In what follows, we just put $\uni$ for $\uni(\chi,\gamma)$.
 
 Following the idea of \cite{gpsa_1}, if we know the structure of the group $\uni$ and the set of its irreducible representations, we can find an \textit{initial} decomposition of the Weil representation in the sense  that the obtained components are not necessarily irreducible. In the sequel,  we make this decomposition explicit.

The group $ \uni$ acts (naturally) on $\mathbb{C}^M$  via $f(\beta.x)$, where   $ \beta.x=\beta(x)$,for   $\beta \in \uni$ and $x \in M$.   It follows that $\uni$ acts  on  $Aut_{\mathbb{C}}(\mathbb{C}^M)$, explicitly we  have
\begin{align*}
&\sigma: \uni \longrightarrow Aut_{\mathbb{C}}(\mathbb{C}^M),\\
&\sigma_{\beta}(f)(x)=f(\beta^{-1}.x).
\end{align*}

Arguing as in \cite{gpsa_1} we see the natural action of $\uni$ on $\mathbb{C}^M$ commutes with the action of the Weil representation of $G=\SLc_*^{\varepsilon}(2,A)$.

\vspace{0.2cm}

Let $\widehat{\uni}$ be the set of the irreducible representations of $\uni$. 
We consider the isotypic decomposition of $\mathbb{C}^M$ with respect to $\uni$:
\[
\mathbb{C}^M \cong \bigoplus_{(V_{\pi},\pi) \in \widehat{\uni}} n_{\pi}V_{\pi}.
\]


Since $n_{\pi}=\dim_{\mathbb{C}}(\Hom_{\uni}(V_{\pi},\mathbb{C}^M)) =\dim_{\mathbb{C}}(\Hom_{\uni}(\mathbb{C}^M,V_{\pi}))$ , if we put $m_{\pi}=\dim_{\mathbb{C}}(V_{\pi})$ we can write this decomposition in the following  way:
\[
\mathbb{C}^M \cong \bigoplus_{(V_{\pi},\pi) \in \widehat{\uni}} m_{\pi}\Hom_{\uni}(\mathbb{C}^M,V_{\pi}.)
\]
For $(V_{\pi},\pi)\in \widehat{\uni}$ and $\beta \in \uni$, we denote by $\pi_{\beta }$ the map $\pi(\beta):V_{\pi} \longrightarrow V_{\pi}$.
The space $\Hom_{\uni}(\mathbb{C}^M,V_{\pi})$ is formed by linear functions $\Theta:\mathbb{C}^M \longrightarrow V_{\pi}$ such that for any $\beta \in \uni$  
\begin{equation}
\Theta \circ \sigma_{\beta} =\pi_{\beta} \circ \Theta .
\label{entre}
\end{equation}

Let us consider the Delta functions $\lbrace e_x \mid x \in M \rbrace$ and the map  $\theta:M \longrightarrow V_{\pi}$  such that $\theta(x)=\Theta(e_x)$ for all $x \in M$. Since $\sigma_{\beta}(e_x)=e_{\beta.x}$, condition (\ref{entre}) becomes:

\begin{equation}
\theta(\beta.x)=\pi_{\beta} \circ \theta(x).
\label{casi}
\end{equation}
Conversely, let $\theta:M \longrightarrow V_{\pi}$ satisfying (\ref{casi}). We extend linearly and we get a map $\Theta:\mathbb{C}^M \longrightarrow V_{\pi}$ such that (\ref{entre}) holds.

Thus, we can see the space $\Hom_{\uni}(\mathbb{C}^M,V_{\pi})$ as the function space whose elements are maps
$\theta:M \longrightarrow V_{\pi}$ such that $\theta(\beta.x)=\pi_{\beta}\circ \theta(x)$, for all $\beta \in \uni$ and $x \in M$. The group $G=\SLc_*^{\varepsilon}(2,A)$ acts on this space via the Weil representation, using the same formulae as defined in Theorem (\ref{gen}). Similarly, it is possible to define the natural action of the group $\uni$ in this space, as it is formed by functions with domain $M$.

Let $\rho$ denote the Weil action of $G$ on $\mathbb{C}^M$ and $\widehat{\rho}$ the Weil action of $G$ on $\bigoplus_{(V_{\pi},\pi) \in \widehat{\uni}} m_{\pi}\Hom_{\uni}(\mathbb{C}^M,V_{\pi})$.

Due to the definition of the Weil representation, there exist scalars $K_g(x,y)\in \mathbb{C}$ depending only on $g \in G$ and $x,y \in M$ such that for all $f \in \mathbb{C}^M$, $\displaystyle{\Lambda \in \bigoplus_{(V_{\pi},\pi) \in \widehat{\uni}} m_{\pi}\Hom_{\uni}\left(\mathbb{C}^M,V_{\pi}\right)}$ the following statements hold:

\begin{align}
\rho_g(f)&= \sum_{y \in M}K_g(\cdot,y)f(y);\\
\widehat{\rho}_g(\Lambda)&=\sum_{y \in M}K_g(\cdot,y)\Lambda(y). \label{a}
\end{align}
Hence, we get:\\

\begin{lem}
 $(\mathbb{C}^M,\rho)$ and $\displaystyle{\left(\bigoplus_{(V_{\pi},\pi) \in \widehat{\uni}} m_{\pi}\Hom_{\uni}\left(\mathbb{C}^M,V_{\pi}\right),\widehat{\rho}\right)}$ are isomorphic representations of $G$.
\end{lem}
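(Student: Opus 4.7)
The plan is to construct an explicit $G$-equivariant identification of the two representations. Since the $\uni$-action on $\mathbb{C}^M$ commutes with the Weil action of $G$, each $\pi$-isotypic subspace $(\mathbb{C}^M)_\pi$ is $G$-stable, so the isomorphism can be verified one isotypic component at a time. As a first step I would invoke the standard evaluation isomorphism
$$(\mathbb{C}^M)_\pi \;\cong\; V_\pi \otimes \Hom_\uni(V_\pi, \mathbb{C}^M), \qquad v \otimes T \longmapsto T(v),$$
which is $\uni$-equivariant on the first tensor factor and, by $[\rho(G), \sigma(\uni)] = 0$, carries a $G$-action on the second factor by post-composition with $\rho$. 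Dualising and fixing a basis of $V_\pi$ to break $V_\pi \otimes (-)$ into $m_\pi$ copies of $(-)$ produces a vector-space isomorphism $\Phi:\mathbb{C}^M \to \bigoplus_\pi m_\pi\,\Hom_\uni(\mathbb{C}^M,V_\pi)$ matching the dimension count $n_\pi m_\pi$ on each component.

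The central verification is then the intertwining identity $\Phi \circ \rho_g = \widehat{\rho}_g \circ \Phi$ for every $g \in G$. Using the kernel descriptions
$$\rho_g(f)(x) \;=\; \sum_{y \in M} K_g(x,y)\,f(y), \qquad \widehat{\rho}_g(\Lambda)(x) \;=\; \sum_{y \in M} K_g(x,y)\,\Lambda(y),$$
this becomes a direct bookkeeping computation: both sides are multiplication by the same kernel $K_g$, applied either to $\mathbb{C}$-valued functions or to $V_\pi$-valued functions on $M$. The necessary fact that $\widehat{\rho}$ is well-defined on $\Hom_\uni(\mathbb{C}^M,V_\pi)$ (i.e.\ preserves the equivariance condition $\theta(\beta.x) = \pi_\beta \circ \theta(x)$) reduces to the $\uni$-invariance of the kernel $K_g(\beta.x,\beta.y) = K_g(x,y)$, which itself follows from the commutation of the two actions already established in the preceding discussion.

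The only real obstacle, as with any double-commutant-style argument, is notational: one must keep separate the $\uni$-action on $\mathbb{C}^M$, the $G$-action by $\rho$, and the two possible Hom-space actions (pre- vs.\ post-composition), together with the basis-dependent identification $V_\pi \otimes (-) \cong (-)^{m_\pi}$. Once these ingredients are aligned, the lemma follows because on each isotypic block the $G$-action on $\mathbb{C}^M$ is literally transported, via $\Phi$, to the same kernel operator on $\Hom_\uni(\mathbb{C}^M, V_\pi)$, which is by definition $\widehat{\rho}_g$.
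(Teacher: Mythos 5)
Your argument follows the same route as the paper: the paper likewise obtains the lemma directly from the isotypic decomposition of $\mathbb{C}^M$ under $\uni$, the identification $n_\pi=\dim\Hom_{\uni}(\mathbb{C}^M,V_\pi)$, and the observation that $\rho_g$ and $\widehat{\rho}_g$ are given by the same $\uni$-invariant kernel $K_g$ acting on scalar-valued and $V_\pi$-valued functions respectively, so your proposal is essentially the paper's proof written out in more detail. The only point worth keeping an eye on is that a naively chosen dualisation $\Hom_{\uni}(V_\pi,\mathbb{C}^M)\cong\Hom_{\uni}(\mathbb{C}^M,V_\pi)$ matches the $\pi$-isotypic block with the $\pi^*$-block, which is harmless here since the sum runs over all of $\widehat{\uni}$ and $m_\pi=m_{\pi^*}$.
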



Finally we have (see \cite{vera}).\\

\begin{prop}
The space $\Hom_{\uni}(\mathbb{C}^M,V_{\pi})$ is invariant under the Weil action of $G$.
\end{prop}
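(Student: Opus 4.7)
The plan is to identify $\Hom_{\uni}(\mathbb{C}^M,V_{\pi})$ with the space of functions $\theta:M\to V_{\pi}$ satisfying $\theta(\beta\cdot x)=\pi_{\beta}\circ\theta(x)$ for all $\beta\in\uni$ and $x\in M$, exactly as in the paragraph preceding equation (\ref{casi}). Under this identification, a map $\Theta$ corresponds to $\theta(x)=\Theta(e_x)$, and the Weil action is
\[
\widehat{\rho}_g(\theta)(x)=\sum_{y\in M}K_g(x,y)\,\theta(y).
\]
The task then reduces to showing that, for every $g\in G$, $\beta\in\uni$ and $x\in M$, one has $\widehat{\rho}_g(\theta)(\beta\cdot x)=\pi_{\beta}\circ\widehat{\rho}_g(\theta)(x)$.

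First, I will extract a symmetry of the kernel $K_g$ from the already noted commutativity $\sigma_{\beta}\circ\rho_g=\rho_g\circ\sigma_{\beta}$. Applying both sides to a Dirac delta $e_y$ and evaluating at $x$, using $\sigma_{\beta}(e_y)=e_{\beta\cdot y}$, yields
\[
K_g(x,\beta\cdot y)=K_g(\beta^{-1}\cdot x,y),
\]
equivalently $K_g(\beta\cdot x,\beta\cdot y)=K_g(x,y)$ for all $\beta\in\uni$. In other words, the kernel is $\uni$-invariant under the diagonal action on $M\times M$.

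Next, I will carry out the direct computation. Starting from
\[
\widehat{\rho}_g(\theta)(\beta\cdot x)=\sum_{y\in M}K_g(\beta\cdot x,y)\,\theta(y),
\]
I will perform the change of summation variable $y=\beta\cdot z$, which is a bijection of $M$ since $\beta$ is an automorphism. Using the kernel invariance $K_g(\beta\cdot x,\beta\cdot z)=K_g(x,z)$ and the intertwining property $\theta(\beta\cdot z)=\pi_{\beta}\circ\theta(z)$, and then pulling $\pi_{\beta}$ (a linear operator on $V_{\pi}$) out of the finite sum, I obtain $\widehat{\rho}_g(\theta)(\beta\cdot x)=\pi_{\beta}\circ\widehat{\rho}_g(\theta)(x)$, which is the required identity.

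The argument is essentially formal and I do not expect a genuine obstacle. The only step that demands a little care is the extraction of the $\uni$-invariance of $K_g$ from the commutativity of the two actions on $\mathbb{C}^M$; once that is in hand, the rest is a routine change-of-variable manipulation in the finite sum defining $\widehat{\rho}_g$.
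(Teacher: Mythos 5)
Your argument is correct. The paper itself gives no proof of this proposition, deferring to [Vera-Gajardo, \emph{J. Lie Theory} 25 (2015)]; your verification uses exactly the ingredients the paper has already set up — the kernel $K_g$, the identity $\sigma_{\beta}(e_y)=e_{\beta\cdot y}$, and the stated commutativity of the $\uni$-action with $\rho$, from which the diagonal invariance $K_g(\beta\cdot x,\beta\cdot y)=K_g(x,y)$ follows — so it is the natural direct proof and fills in precisely what the paper leaves to the citation.
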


\vspace{0.5cm}

Having made the decomposition above explicit, our aim is to obtain an initial decomposition for our particular case $\SLc_*^{-1}(2,A_n)$. For this purpose it is enough to know the structure of the group $\uni$ and the set of irreducible representations, which is achieved in the next proposition.\\

 \begin{prop}
 \label{desc}
 The unitary group  $U=U(M,\chi,\gamma,c)$ is isomorphic to the center of $\uni(n,n)(K/k)$, namely, the set of $\lambda \in K$ such that $\Nor_{K/k}(\lambda)=1$ .
 \end{prop}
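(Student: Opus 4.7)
My plan is to proceed in three steps: determine $\Aut_{A_n}(M)$ abstractly, cut it down by the $\gamma$-invariance condition defining $\uni$, and then match the resulting subgroup with the center of $\uni(n,n)(K/k)$.

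First I would show that $\Aut_{A_n}(M) \cong K^\times$ via left scalar multiplication $\varphi_\lambda(x) = \lambda x$. Since $M = K^n$ is the standard simple right $\m_n(K)$-module, Schur's lemma applies; concretely, any $A_n$-linear endomorphism is determined by $\varphi(e_1)$, and the requirement $\varphi(e_1)\cdot a = 0$ whenever the first row of $a$ vanishes forces $\varphi(e_1) \in K e_1$, whence $\varphi(x) = \lambda x$ for a unique $\lambda \in K$.

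Next I would impose the invariance condition $\gamma(b, \varphi_\lambda(x)) = \gamma(b, x)$. Using $(\lambda x)^* = \overline{\lambda}\, x^*$ and the formula from Proposition \ref{gamma},
\begin{equation*}
\gamma(b, \lambda x) = \psi\bigl(2^{-1} \lambda\overline{\lambda}\, x b x^{*}\bigr) = \psi\bigl(2^{-1}\, \Nor_{K/k}(\lambda)\, x b x^{*}\bigr),
\end{equation*}
so $\varphi_\lambda \in \uni$ iff $\psi\bigl(2^{-1}(\Nor_{K/k}(\lambda)-1)\, x b x^{*}\bigr) = 1$ for every hermitian $b \in A_n$ and every $x \in M$. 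Specializing to $b = I_n$, the scalar $x x^{*} = \sum_i \Nor_{K/k}(x_i)$ already sweeps out all of $k$ as $x$ ranges over $K^n$, since the finite-field norm $K^\times \to k^\times$ is surjective. The condition therefore reduces to $\psi|_k\bigl((\Nor_{K/k}(\lambda)-1)\, t\bigr) = 1$ for every $t \in k$.

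The crux is the character-theoretic step of showing that $\psi|_k$ is a nontrivial character of $k^{+}$. Writing $\psi(\mu) = \psi_0(\operatorname{Tr}_{K/k}(a\mu))$ for a fixed nontrivial character $\psi_0$ of $k^{+}$ and some $a \in K^\times$, the hypothesis $\psi(\mu) = \psi(\overline{\mu})$ translates into $\operatorname{Tr}_{K/k}((a - \overline{a})\mu) = 0$ for all $\mu \in K$, forcing $a \in k^\times$; then $\psi|_k(t) = \psi_0(2at)$ is nontrivial. Hence $\Nor_{K/k}(\lambda) = 1$ is forced, while the converse implication is immediate from the computation above. This gives $\uni \cong \{\lambda \in K^\times : \Nor_{K/k}(\lambda) = 1\}$, which I would then identify with the standard description of the center of $\uni(n,n)(K/k)$ as the scalar matrices $\lambda I_{2n}$ with $\lambda \overline{\lambda} = 1$. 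The main obstacle is precisely this character-theoretic step, which genuinely uses the conjugation-invariance $\psi(\overline{\mu}) = \psi(\mu)$ of the chosen additive character; without it one could not rule out $\psi|_k$ being trivial and so the whole argument would collapse.
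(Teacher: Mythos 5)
Your proof is correct and follows the same two-step route as the paper's: $A_n$-linearity forces $\varphi$ to be a scalar $\lambda \in K^{\times}$ (Schur's lemma for the simple module $K^n$ over $\m_n(K)$), and the $\gamma$-invariance condition then forces $\Nor_{K/k}(\lambda)=1$. The only difference is that you supply details the paper leaves implicit --- in particular the verification that $\psi|_{k}$ is a nontrivial character of $k^{+}$, deduced from the conjugation-invariance $\psi(\overline{\mu})=\psi(\mu)$, which is exactly the point on which the norm-one conclusion rests.
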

 \begin{proof}
 Let $\beta$ be an element in $U$. From  definition $\beta$ is $K$-linear, then  $\beta(xa)=\beta(x)a$, for all $x\in K^n$ and $a\in A_n$. So in matrix language $\beta a=a\beta$, for all $a\in A_n$. This implies that $\beta$ is a scalar transformation $\lambda \ide$ of $K^n$. Now the condition  $\gamma(b,\varphi(x))=\gamma(b,x)$ forces that $\lambda$ has norm 1.
 \end{proof}
 
\section{Compatibility of methods} 
In this last section, we address the natural question about the compatibility of methods to construct  Weil representations of unitary groups.  Specifically, we show that our generalized Weil representation of 
$\uni(n,n)\left(K/k\right)$ via presentation is equivalent to the classical one constructed by G\'erardin \cite{ge}.

Let us recall that the nondegenerate skew-hermitian form $i$ on the $2n$ dimensional $K$-vector space $F$  is given by $i(x,y)=xJ_{-}y^*$, where $J_-=\left [\begin{smallmatrix}0& I_{n}\\-I_{n}& 0\end{smallmatrix}\right]$.
We denote by $E$ the underlying $k$-vector space $F$ (as in \cite{ge}),  and consider the nondegenerate symplectic form 
$j$ on $E$ given by:
\[
j(x,y)=i(x,y)-i(y,x) \;\;\; \;\;\;x,y \in F.
\]
Then the isometry group of $i$, which is  $\SLc_*^{-1}(2,A_n) \cong \uni(n,n)(K/k)$, can be embedded into $\Sp(E,j)$.\\

Hence we prove the following

\begin{thm}
The restriction of the Weil representation of the symplectic group $\Sp(E,j)$ to the group $\SLc_*^{-1}(2,A_n) \cong \uni(n,n)(K/k)$ is the representation $(\mathbb{C}^M,\rho)$ constructed in Theorem 4.4.
\end{thm}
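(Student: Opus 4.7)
The approach is standard for this kind of compatibility result: since both representations act (after a suitable identification) on the same space $\mathbb{C}^M$, and since $\uni(n,n)(K/k) \cong \SLc_*^{-1}(2, A_n)$ is generated by the Bruhat-like generators $u_b$, $h_t$, $w$, it suffices to check that the two representations agree on each of these three generators.

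The first step is to fix the model. Take $L = \{(0, y) : y \in K^n\} \subset F = E$; this is a Lagrangian of $(E, j)$, since $i|_{L \times L} = 0$ (hence also $j|_{L \times L} = 0$) and $\dim_k L = 2n = \tfrac12 \dim_k E$. Identify $L$ with $M = K^n$ via $(0, y) \leftrightarrow y$; this identifies $\mathbb{C}^L$ with $\mathbb{C}^M$ as vector spaces. Realize the classical Weil representation of $\Sp(E, j)$ in a Schr\"odinger-type model on $\mathbb{C}^L$, using the opposite Lagrangian $L' = \{(x, 0)\}$ and a nontrivial additive character $\psi_0$ of $k^+$. The relation between $\psi_0$ and the character $\psi$ of $K^+$ used in Section 4 is $\psi(\lambda) = \psi_0\!\left(\tfrac12 \mathrm{tr}_{K/k}\lambda\right)$, which is well-defined thanks to the $*$-invariance $\psi(\bar\lambda) = \psi(\lambda)$; equivalently $\psi_0 = \psi|_k$.

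Next, one computes the classical Weil operator on each generator and compares with the formulas of Theorem 4.4. For $u_b$ (with $b^* = b$): $u_b$ fixes $L$ pointwise and sends $(x, 0) \mapsto (x, xb)$, so the classical operator is multiplication by $\psi_0$ of a quadratic form on $L$, which a direct calculation identifies with $y \mapsto \tfrac12 \langle yb, y\rangle$. Since $\langle yb, y\rangle \in k$ by hermiticity of $b$, this scalar equals $\psi(\tfrac12 \langle yb, y\rangle) = \gamma(b, y)$, matching $\rho_{u_b}$. For $h_t$: this element preserves the polarization, and its classical Weil operator has the form $e_a \mapsto \varepsilon(h_t)\, e_{a'}$ where $\varepsilon(h_t)$ is the quadratic character of the $k$-determinant of $h_t|_L$; using the identity $\sign_k \circ N_{K/k} = \sign_K$ (an immediate consequence of $N_{K/k}(\lambda) = \lambda^{q+1}$ and $\sign_K(\lambda) = \lambda^{(q^2-1)/2}$), this sign equals $\sign_K(\det_K t) = \alpha(t)$. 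For $w$: the element exchanges $L$ with the opposite Lagrangian, so its Weil operator is a Fourier transform whose kernel is $\psi_0$ composed with the restriction $j|_{L' \times L}$. A direct calculation gives $j((x, 0), (0, y)) = xy^* + \overline{xy^*} = \mathrm{tr}_{K/k}\langle x, y\rangle$, so the kernel is $\psi(\langle x, y\rangle) = \chi(x, y)$, reproducing $\rho_w$ up to the normalizing constant.

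The main obstacle is identification of the normalizing constant $c = -1/q^n$. The Plancherel constant of the classical Weil representation is given by the Gauss sum of a $k$-quadratic form $Q$ on $L \cong K^n$ (precisely the form of Lemma 4.2), and the nonsplit form on $2n$ variables over $k$ yields exactly the Gauss sum $-q^n$ needed to get $c = -1/q^n$, reusing the computation already performed in Section 4. A related but milder point is the handling of the Weil index for $h_t$: since $k$ is of odd characteristic, the relevant Weil index collapses to the $k$-quadratic character of $\det_k(h_t|_L)$, and the reduction to $\alpha(t)$ then uses the sign-character identity above. Throughout, care is needed to align the conventions (order of composition on $M$ versus on $L$, sign of $j$) so that both representations act identically on the basis $(e_a)_{a \in M}$.
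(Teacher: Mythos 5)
Your proposal follows essentially the same route as the paper: fix a complete polarization of $(F,i)$, identify $M$ with one Lagrangian, realize the classical Weil representation in the Schr\"odinger model (the paper cites Propositions 34--36 of Aubert--Przebinda for the explicit operators), and verify agreement generator by generator --- quadratic-form multiplier $\gamma(b,\cdot)$ for $u_b$, determinant sign character $\alpha(t)$ for $h_t$, Fourier kernel $\chi$ for $w$, and the Gauss sum of the nonsplit form for the constant $c=-1/q^n$. The only differences are cosmetic choices of Lagrangian and of how the additive character of $k$ is related to $\psi$ (the paper uses $\varphi(-2\lambda)=\psi(\lambda)$ on $K^+$, you use $\psi_0=\psi|_k$ with a $\tfrac12\mathrm{tr}_{K/k}$), which is exactly the convention bookkeeping you flag.
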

\begin{proof} We consider the following complete polarization of the space $(K^{2n}, i)$: $F^-$ consists of all elements $x=(x_l)\in K^{2n}$ such that $x_l=0$, for an $l>n$ and $F^+$ consists  of all elements $x=(x_l)\in K^{2n}$ such that $x_l=0$, for an $l<n$. Let $(\mathbb{C}^{F^-},W^{(F,i)})$ be the restriction of the classic Weil representation of $\Sp(E,j)$ to $\SLc_*^{-1}(2,A_n)$.
It suffices to show that $W^{(F,i)}$ coincides with $\rho$ on the generators of $\SLc_*^{-1}(2,A_n)$. To prove that, we will use Propositions 34, 35 and 36 in \cite{aubert}. 

Let us note that the module $M$ in section 4 is canonically isomorphic to $F^-$. Thus, we identify $x \in M$ with $(x,0) \in F^- $.

We take  a nontrivial character  $\varphi$ of the additive group $K^+$ such that $\varphi(-2\lambda)=\psi(\lambda)$ for all $\lambda \in K^+$ and recall that $\psi$ is a nontrivial character of $K^+$ satisfying $\psi(\lambda)=\psi(\overline{\lambda})$ for all $\lambda \in K^+$. Then
\begin{enumerate}
\item Since $h_t$ preserves $F^+$ and $F^-$ we can use Proposition 34  in \cite{aubert}  to see that 
\[
W_{h_t}^{(F,i)}(f)(x)=s({\det h_t\vert_{F^-}})f(xt).
\]
Since $x\in F^-$, the action $x\mapsto xh_t$ is just $x\mapsto xt$. Then $\det h_t\vert_{F^-}=\det(t)$, whence  $s({\det h_t\vert_{F^-}})=\alpha(t)$.
\item  The element $u_s$ acts trivially on $F^+$ and on $K^{2n}/F^+$. Thus, from  \cite{aubert}  Proposition 35 we get $$W_{u_s}^{(F,i)}(f)(x)=\varphi(j(xc(-u_s),x))f(x).$$

 We consider the  Cayley  transform  $c(-u_s)=\begin{pmatrix} 0&\frac{1}{2}s\\0&0\end{pmatrix}$ in some basis. 
Then 
 $j(xc(-u_s),x)=- \langle xs, x \rangle$ and hence $\varphi(j(xc(-u_s),x))=\psi(2^{-1}\langle xs,x\rangle)$.
\item Notice that  $w$ maps bijectively  $F^{-} $ onto $F^{+} $ and $F^{+} $ onto $F^{-} $. So from the fact that  $w^2=-1$ and Proposition 36 in  \cite{aubert} we have
\[
W_{w}^{(F,i)}(f)(x)=\gamma(1)^{\dim_k(X)}\dfrac{1}{\vert F^{-}\vert^{1/2}}\sum_{y\in F^-}\varphi(j(xw,y))f(y)
\]
We observe that $\gamma(1)^{\dim_k(X)}=1$ since $\dim_k(X)$ is even and $\gamma(1)^2=1$.
Then recalling that $\psi(\overline{\lambda})=\psi(\lambda)$ for all $\lambda \in K$ we obtain,
\[
\varphi (j(xw,y))=\varphi(i(xw,y)-i(y,xw))=\varphi(-2xy^*)=\psi(\langle x,y\rangle).
\]
Therefore both representations of $\uni(n,n)(K/k)$ are equal, and the theorem follows.


\end{enumerate}
\end{proof}
According to Proposition \ref{desc}, \cite{ge} Corollary 4.5  and the theorem above  we get:
\begin{cor}
The  components of the generalized Weil representation $(\mathbb{C}^M,\rho)$  of  $\uni(n,n)\left(K/k\right)$ given in section 5 are irreducible.
\end{cor}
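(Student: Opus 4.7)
The plan is to combine three already-established ingredients: the initial $U$-isotypic decomposition from Section 5, Proposition \ref{desc} which identifies $U(\chi,\gamma)$ with the norm-one subgroup of $K^\times$, and the compatibility Theorem 6.1, in order to transport Gérardin's classical irreducibility statement (Corollary 4.5 of \cite{ge}) to our representation $\rho$.

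First, I would recall from Section 5 that
\[
\mathbb{C}^M \cong \bigoplus_{(V_\pi,\pi)\in \widehat{\uni}} m_\pi\,\Hom_{\uni}(\mathbb{C}^M,V_\pi),
\]
each summand being stable under the Weil action of $G=\SLc_*^{-1}(2,A_n)$. By Proposition \ref{desc}, $\uni(\chi,\gamma)$ is cyclic abelian of order $q+1$, so every irreducible $(V_\pi,\pi)$ is one-dimensional, $m_\pi=1$, and the summands are simply the $\uni$-isotypic eigenspaces of $\mathbb{C}^M$. Thus the components whose irreducibility we must establish are precisely these $q+1$ character eigenspaces.

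Next, I would use Theorem 6.1 to identify $(\mathbb{C}^M,\rho)$ with the restriction of Gérardin's classical Weil representation of $\Sp(E,j)$ to $\uni(n,n)(K/k)$. Under this identification, the natural action of $\uni(\chi,\gamma)$ on $\mathbb{C}^M$ agrees, up to the single fixed character $\lambda \mapsto \alpha(\lambda\,\ide)=\sign(\lambda^n)$ of $\uni(\chi,\gamma)$, with the action of the center of $\uni(n,n)(K/k)$ via $\rho$; indeed, a scalar $\lambda\cdot\ide$ sends $e_a$ to $e_{\lambda^{-1}a}$, while $\rho_{h_{\lambda\,\ide}}(e_a)=\alpha(\lambda\,\ide)\,e_{\lambda^{-1}a}$. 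This twist only relabels the characters and leaves the set of isotypic subspaces unchanged, so the partition of $\mathbb{C}^M$ into $\uni(\chi,\gamma)$-isotypic components coincides with its partition into isotypic components for the center of $\uni(n,n)(K/k)$ acting via the classical Weil representation.

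Finally, Gérardin's Corollary 4.5 asserts that the isotypic components of the classical Weil representation under the center of $\uni(n,n)(K/k)$ are irreducible $\uni(n,n)(K/k)$-modules; transporting this conclusion through the isomorphism provided by Theorem 6.1 yields the desired irreducibility of each summand of our decomposition. The only real obstacle is the bookkeeping in the previous paragraph, namely confirming that the abstract intertwining action of $\uni(\chi,\gamma)$ and the central action via $\rho$ give the same partition of $\mathbb{C}^M$ into isotypic subspaces; once this compatibility is made precise, Gérardin's result does all the substantive work.
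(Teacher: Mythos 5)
Your proposal is correct and follows essentially the same route as the paper, which derives the corollary directly from Proposition \ref{desc}, Theorem 6.1, and Corollary 4.5 of \cite{ge}; you have simply made explicit the bookkeeping (matching the intertwining action of $\uni(\chi,\gamma)$ with the central action via $\rho$ up to a character twist) that the paper leaves implicit. The only quibble is that with the paper's convention $\sigma_\beta(f)(x)=f(\beta^{-1}.x)$ the scalar $\lambda\cdot\ide$ sends $e_a$ to $e_{\lambda a}$ rather than $e_{\lambda^{-1}a}$, but since the group is abelian this inversion does not change the isotypic decomposition.
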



\begin{thebibliography}{99}

\bibitem{aubert}
Aubert, A., T. Przebinda. (2014). A reverse engineering approach to Weil Representation.
\emph{Cent. Eur. J. Math} 12 :1500--1585.

\bibitem{dieu}
Dieudonn\'e, J. (1967). \emph{Sur les groupes classiques}. Publications de L'Institut de Math\'ematique de L'Universit\'e de Strasbourg, VI. Actualit\'es Scientifiques et Industrielles, No. 1040. Hermann, Paris.

\bibitem{Pr-Du}
  Dutta, K.,  Prasad, A. (2015). Combinatorics of finite abelian groups and Weil representations.
{\textit{ Pacific J. Math.}} 275 (2) :295--324. 


\bibitem{He-Sze}  
Herman,  A.,   Szechtman,  F.  (2014).  Weil representation of unitary groups associated to a ramified quadratic extension of  local rings. {\textit{  J. of Algebra}}. 392: 158-184.

\bibitem{ge}  
G\'erardin  P. (1977). Weil representations associated to finite fields.
\emph{J. Algebra}. 46:54-101.

\bibitem{grove}
Groove, L. (2002). \emph{Classical groups and geometric algebra.} Graduate Studies in Mathematics, No. 39. American Mathematical Society.
 
\bibitem{lucho}
 Guti\'errez, L. (2009).
A Generalized Weil Representation for $\SLc_*(2,A_m)$, where $A_m = \mathbb{F}_q[x]/\left\langle x^m\right\rangle$.
\emph{J. Algebra}. 332: 42--53.

\bibitem{gpsa_1}
Guti\'errez, L., Pantoja,  J.,  Soto Andrade, J. (2011).
On Generalized Weil Representations over Involutive Rings.
 \emph{Contemp. Math}. 544: 109--122.



\bibitem{gp}
Guti\'errez, L.,  Pantoja,  J. (2015).  Weil representation of a generalized linear group over a ring of truncated polynomials ring over a finite field endowed with a second class of involution. \emph{SIGMA}. 11, 076.



\bibitem{ku}
 Kutzko, P. (1984). The exceptional representations of GL2.
\emph{Compos. Math}. 51: 3--14.


\bibitem{lion}
 Lion, G., Vergne,  M. (1980). \emph{The Weil Representation, Maslov Index and Theta Series}. Progress in Mathematics, No. 6. Birkh\"auser.
 
 \bibitem{nobs3}
 Nobs,  A. (1977).  Die irreduziblen Darstellungen von $\GL_2(\Z_p)$, insbesondere $\GL_2(\Z_2)$.
 \emph{Math. Ann}.  229(2): 113--133.
 
\bibitem{nobs1}
Nobs, A. (1976). Die irreduziblen Darstellungen der Gruppen $\SLc_2(\Z_p)$, insbesondere 
$\SLc_2(\Z_2)$. I.
\emph{Comment. Math. Helv}.51: 465--489.

\bibitem{nobs2}

Nobs, A., Wolfart, J. (1976)
Die irreduziblen Darstellungen der Gruppen $\SLc_2(\Z_p)$, insbesondere
$\SLc_2(\Z_2)$.II. \emph{ Comment. Math. Helv.} 51: 491--526.

\bibitem{jofal}
 Pantoja, J., Soto-Andrade, J. (2003). A Bruhat decomposition
of the group $\SLc_{\ast}(2,A)$.
 \emph{J. of Algebra}. 262:401--412.

\bibitem{manusc}
Pantoja, J. (2006). A presentation of the group
$\SLc_{\ast}(2,A)$, $A$ a simple artinian ring with
involution.\emph{Manuscripta math.}121: 97--104.

\bibitem{comm}
Pantoja, J.,  Soto-Andrade, J. (2009) Bruhat presentations for
$\ast$-classical groups.
\emph{Comm. in Algebra}. 37: 4170--4191.



\bibitem{Tan_1}
Tanaka, S. (1967). Construction and classification  of irreducible  representations  of special linear group of the second order  over finite field.
\emph{Osaka J. Math.}4:65-84.

\bibitem{Tan_2}
Tanaka, S. (1967). Irreducible representations of the binary modular congruence groups $\mod p^{\lambda}$.
\emph{J. Math.  Kyoto Univ.} 7: 123-132.





\bibitem{soto}
Soto-Andrade, J. (1978). Repr\'{e}sentations de certain groupes
symplectiques finis.\emph{ Bull. Soc. Math. France Mem.} 55-56: 5--334.

\bibitem{vera}
Vera-Gajardo, A., (2015). A Generalized Weil Representation for the finite split orthogonal group $\ot_q(2n,2n)$, $q$ odd, $q>3$.
 \emph{J. Lie Theory.}  25: 257--270.

\bibitem{weil}
Weil, A. (1964). Sur certains groupes d'op\'erateurs unitaires.
 A\emph{Acta Math.} 111: 143--211.




\end{thebibliography}
\end{document}